\def\Cal{\mathcal}
\def\<<{\langle } 
\def\>>{\rangle }
\numberwithin{equation}{section} 
\newtheorem{theorem}{Theorem}[section] 
\newtheorem{proposition}[theorem]{Proposition} 
\newtheorem{proposition-definition}[theorem]{Proposition-Definition} 
\newtheorem{definition}[theorem]{Definition} 
\newtheorem{remark}[theorem]{Remark}
\newtheorem{problem}[theorem]{Problem}
\def\sgn{\operatorname{sgn}}
\def\<{\langle} 
\def\>{\rangle} 
\def\Rp{{\scriptstyle \mathbf{R}_+}}
\def\Rn{{\scriptstyle -\mathbf{R}_+}}
\def\Ri{{\scriptstyle \sqrt{-1}\cdot\mathbf{R}_+}}
\def\Rj{{\scriptstyle -\sqrt{-1}\cdot\mathbf{R}_+}}
\def\comment#1{} 
\begin{document} 
 
\title{Arithmetic-geometric means for hyperelliptic curves and 
Calabi-Yau varieties} 
\dedicatory{Dedicated to Professor~Kazuo~Okamoto on 
the~occasion of his~60-th~birthday.}

\author{Keiji Matsumoto and Tomohide Terasoma}
 
\begin{abstract}
In this paper, we define a generalized 
arithmetic-geometric mean $\mu_g$  
among $2^g$ terms  
motivated by $2\tau$-formulas 
of theta constants. 
By using Thomae's formula,  
we give two expressions of $\mu_g$
when initial terms satisfy some conditions.
One is given in terms of period integrals of
a hyperelliptic curve $C$ of 
genus $g$.
The other is by a period integral of 
a certain Calabi-Yau $g$-fold given as 
a double cover of the $g$-dimensional projective space $\mathbf{P}^g$.

\end{abstract}

\subjclass[2000]{Primary 14K20; Secondary 32G20} 

\maketitle 

\makeatletter 
\renewcommand{\@evenhead}{\tiny \thepage \hfill 
Arithmetic-Geometric Means
\hfill}

\renewcommand{\@oddhead}{\tiny \hfill 
K.Matsumoto and T.Terasoma
\hfill \thepage} 

\section{Introduction}
Let $\{a_{n,0}\}_n$ and $\{a_{n,1}\}_n$ be positive real sequences 
defined by  the recurrence relations 
\begin{equation}
\label{classical recursive}
a_{n+1,0}=\frac{a_{n,0}+a_{n,1}}{2},\quad a_{n+1,1}=\sqrt{a_{n,0}a_{n,1}}
,
\end{equation}
and initial terms $a_{0,0}=a_0$, $a_{0,1}=a_1$ with $0<a_1<a_0$. 
One can easily show that $\{a_{n,0}\}_n$ and $\{a_{n,1}\}_n$ 
have a common limit, which is called 
the arithmetic-geometric mean of $a_0$ and $a_1$, and is denoted by 
$\mu_1(a_0,a_1)$. By the homogeneity of the arithmetic and geometric means, 
we have 
$\mu_1(ca_0,ca_1)=c\mu_1(a_0,a_1)$ for any positive real number $c$.

On the other hand,  two Jacobi's theta constants 
$\theta_0$ and $\theta_1$ satisfy the following $2\tau$-formulas:
\begin{equation}
\label{classical 2 isog}
\theta_0(2\tau)^2=\frac{\theta_0(\tau)^2+\theta_1(\tau)^2}{2},
\quad 
\theta_1(2\tau)^2=\theta_0(\tau)\theta_1(\tau),
\end{equation}
where 
$$
\theta_{i}(\tau)=\sum_{n\in \mathbf{Z}}\exp(\pi\sqrt{-1} (n^2\tau+i n))
,\qquad i=0,1,
$$
and $\tau$ belongs to the upper half space $\mathbf{H}$.
If we find an element $\tau\in \mathbf{H}$ such that 
$\theta_1(\tau)^2/\theta_0(\tau)^2=a_1/a_0$
for given initial terms $a_0$ and $a_1$, 
then we have
$$\frac{a_0}{\mu_1(a_0,a_1)}
=\frac{\theta_0(\tau)^2}{\mu_1(\theta_0(\tau)^2,\theta_1(\tau)^2)}=
\frac{\theta_0(\tau)^2}{\mu_1(\theta_0(2^n\tau)^2,\theta_1(2^n\tau)^2)}
=
\theta_0(\tau)^2$$
by (1.1), (1.2) and  $\lim\limits_{n\to\infty}\theta_i(2^n\tau)=1.$
Moreover, Jacobi's formula between $\theta_0(\tau)^2$ and 
an elliptic integral
implies that 
$$\frac{a_0}{\mu_1(a_0,a_1)}=
\frac{2}{\pi}\int_0^1\frac{dx}{\sqrt{(1-x^2)(1-k^2x^2)}},\quad 
k=\frac{\sqrt{a_0^2-a_1^2}}{a_0}.$$

In this paper, we define a generalized 
arithmetic-geometric mean $\mu_g$  
among $2^g$ terms $(\dots,a_{I},\dots)$ $(I\in \mathbf{F}_2^g)$ 
motivated by the $2\tau$-formulas 
(\ref{twice formular})
of theta constants obtained by Theorem 2 in \cite{I} p.139. 
By using Thomae's formula,  
we give two expressions of $\mu_g$
whose initial terms are given as (\ref{initial}) for some 
$2g+1$ real numbers $p_j$.
One is given in terms of period integrals of the 
hyperelliptic curve $C$ of 
genus $g$ represented by the double cover of the complex projective line 
$\mathbf{P}^1$ branching at $\infty$ and $2g+1$ points $p_j.$ 
The other is by a period integral of the Calabi-Yau $g$-fold which is 
the double cover of the $g$-dimensional projective space $\mathbf{P}^g$ 
branching 
along the dual hyperplanes of
the images of 
$\infty$ and $p_j$ $(j=1,\dots,2g+1)$ under the Veronese 
embedding of $\mathbf P^1$ into $\mathbf P^g$.

In 1876, Borchardt studied in \cite{B} the case of $g=2$: 
the generalized arithmetic-geometric mean $\mu_2$ of 
$a=(a_{00},a_{01},a_{10},a_{11})$ was given by 
the iteration of four means
\begin{eqnarray*}
& &\frac{a_{00}+a_{01}+a_{10}+a_{11}}{ 4},\quad 
\frac{\sqrt{a_{00}a_{01}}+\sqrt{a_{11}a_{10}}}{ 2},\\
& &\frac{\sqrt{a_{00}a_{10}}+\sqrt{a_{11}a_{01}}}{ 2},\qquad 
\frac{\sqrt{a_{00}a_{11}}+\sqrt{a_{10}a_{01}}}{ 2},
\end{eqnarray*}
and $\mu_2(a)$ was expressed in terms of 
period integrals of a hyperelliptic curve of genus $2$. 
Mestre showed in \cite{Me} that 
$\mu_2(a)$ could be expressed 
in terms of $\mu_1$ and some algebraic functions of $a$
when 
$$a_{00}>a_{01}=a_{10}>a_{11},\quad a_{00}a_{11}>a_{01}a_{10}.$$

\section{Comparison to theta constants}
\label{comparison to theta}
We define a hyperelliptic curve
$C$ of genus $g$
by
$$
C:y^2=(x-p_1)\cdots (x-p_{2g+1}),
$$
where $p_j$'s are real numbers satisfying
$p_1< \dots <p_{2g+1}$.
As in \cite{M2} p.76,
we choose the cycles $A_1,\dots, A_g,B_1,\dots, B_g$ in the union of the
following two sheets (I),(II) in Figure \ref{cycles}. 
Here $\mathbf R_+$ is the set of non-negative
real numbers, the range of values of $y$ is written, and
the cycles in the sheet II are written in thick lines.
\begin{figure}[hbt]

\unitlength 1pt
\begin{picture}(302.3105,219.7975)( -3.5566,-242.2041)
\put(213.3957,-56.9055){\makebox(0,0){$\cdots$}}%
\put(28.4528,-56.9055){\makebox(0,0)[rb]{$p_1$}}%
\put(113.8110,-56.9055){\makebox(0,0)[rb]{$p_3$}}%
\put(270.3012,-56.9055){\makebox(0,0)[rb]{$p_{2g+1}$}}%
\put(71.1319,-56.9055){\makebox(0,0)[lb]{$p_2$}}%
\put(156.4902,-56.9055){\makebox(0,0)[lb]{$p_4$}}%
\put(156.4902,-28.4528){\makebox(0,0){Sheet I (solid line)}}%
\put(49.7923,-49.7923){\makebox(0,0){$\Rn$}}%
\put(49.7923,-64.0187){\makebox(0,0){$\Rp$}}%
\put(135.1506,-49.7923){\makebox(0,0){$\Rp$}}%
\put(135.1506,-64.0187){\makebox(0,0){$\Rn$}}%
\put(92.4715,-64.0187){\makebox(0,0){$\Ri$}}%
\put(177.8297,-64.0187){\makebox(0,0){$\Rj$}}%
\put(213.3957,-113.8110){\makebox(0,0){$\cdots$}}%
\put(28.4528,-113.8110){\makebox(0,0)[rb]{$p_1$}}%
\put(113.8110,-113.8110){\makebox(0,0)[rb]{$p_3$}}%
\put(270.3012,-113.8110){\makebox(0,0)[rb]{$p_{2g+1}$}}%
\put(71.1319,-113.8110){\makebox(0,0)[lb]{$p_2$}}%
\put(156.4902,-113.8110){\makebox(0,0)[lb]{$p_4$}}%
\put(156.4902,-85.3583){\makebox(0,0){Sheet II (dotted line)}}%
\put(49.7923,-106.6978){\makebox(0,0){$\Rp$}}%
\put(49.7923,-120.9242){\makebox(0,0){$\Rn$}}%
\put(135.1506,-106.6978){\makebox(0,0){$\Rn$}}%
\put(135.1506,-120.9242){\makebox(0,0){$\Rp$}}%
\put(92.4715,-120.9242){\makebox(0,0){$\Rj$}}%
\put(177.8297,-120.9242){\makebox(0,0){$\Ri$}}%
%
\special{pn 13}%
\special{pa 394 788}%
\special{pa 985 788}%
\special{fp}%
\special{pa 1575 788}%
\special{pa 2166 788}%
\special{fp}%
\special{pa 3544 788}%
\special{pa 4134 788}%
\special{fp}%
\put(298.7539,-56.9055){\makebox(0,0)[lb]{$\infty$}}%
%
\special{pn 13}%
\special{pa 394 1575}%
\special{pa 985 1575}%
\special{fp}%
\special{pa 1575 1575}%
\special{pa 2166 1575}%
\special{fp}%
\special{pa 3544 1575}%
\special{pa 4134 1575}%
\special{fp}%
\put(298.7539,-113.8110){\makebox(0,0)[lb]{$\infty$}}%
\put(213.3957,-199.1693){\makebox(0,0){$\cdots$}}%
\put(28.4528,-199.1693){\makebox(0,0)[rb]{$p_1$}}%
\put(113.8110,-199.1693){\makebox(0,0)[rb]{$p_3$}}%
\put(270.3012,-199.1693){\makebox(0,0)[rb]{$p_{2g+1}$}}%
\put(71.1319,-199.1693){\makebox(0,0)[lb]{$p_2$}}%
\put(156.4902,-199.1693){\makebox(0,0)[lb]{$p_4$}}%
\put(156.4902,-142.2638){\makebox(0,0){Cycles}}%
%
\special{pn 13}%
\special{pa 394 2756}%
\special{pa 985 2756}%
\special{fp}%
\special{pa 1575 2756}%
\special{pa 2166 2756}%
\special{fp}%
\special{pa 3544 2756}%
\special{pa 4134 2756}%
\special{fp}%
\put(298.7539,-199.1693){\makebox(0,0)[lb]{$\infty$}}%
%
\special{pn 8}%
\special{pa 197 2560}%
\special{pa 985 2560}%
\special{fp}%
\special{sh 1}%
\special{pa 985 2560}%
\special{pa 919 2540}%
\special{pa 933 2560}%
\special{pa 919 2579}%
\special{pa 985 2560}%
\special{fp}%
%
\special{pn 8}%
\special{pa 985 2560}%
\special{pa 1182 2560}%
\special{fp}%
\special{pa 1182 2560}%
\special{pa 1182 2953}%
\special{fp}%
%
\special{pn 8}%
\special{pa 1182 2953}%
\special{pa 394 2953}%
\special{fp}%
\special{sh 1}%
\special{pa 394 2953}%
\special{pa 460 2973}%
\special{pa 446 2953}%
\special{pa 460 2934}%
\special{pa 394 2953}%
\special{fp}%
%
\special{pn 8}%
\special{pa 394 2953}%
\special{pa 197 2953}%
\special{fp}%
\special{pa 197 2953}%
\special{pa 197 2560}%
\special{fp}%
\put(71.1319,-184.9429){\makebox(0,0)[lb]{$A_1$}}%
%
\special{pn 8}%
\special{pa 1378 2560}%
\special{pa 2166 2560}%
\special{fp}%
\special{sh 1}%
\special{pa 2166 2560}%
\special{pa 2100 2540}%
\special{pa 2114 2560}%
\special{pa 2100 2579}%
\special{pa 2166 2560}%
\special{fp}%
%
\special{pn 8}%
\special{pa 2166 2560}%
\special{pa 2363 2560}%
\special{fp}%
\special{pa 2363 2560}%
\special{pa 2363 2953}%
\special{fp}%
%
\special{pn 8}%
\special{pa 1575 2953}%
\special{pa 1378 2953}%
\special{fp}%
\special{pa 1378 2953}%
\special{pa 1378 2560}%
\special{fp}%
\put(156.4902,-184.9429){\makebox(0,0)[lb]{$A_2$}}%
%
\special{pn 8}%
\special{pa 2363 2953}%
\special{pa 1575 2953}%
\special{fp}%
\special{sh 1}%
\special{pa 1575 2953}%
\special{pa 1641 2973}%
\special{pa 1627 2953}%
\special{pa 1641 2934}%
\special{pa 1575 2953}%
\special{fp}%
%
\special{pn 8}%
\special{pa 1871 2756}%
\special{pa 1871 2363}%
\special{fp}%
\special{sh 1}%
\special{pa 1871 2363}%
\special{pa 1851 2429}%
\special{pa 1871 2415}%
\special{pa 1890 2429}%
\special{pa 1871 2363}%
\special{fp}%
%
\special{pn 8}%
\special{pa 1871 2363}%
\special{pa 3052 2363}%
\special{fp}%
\special{sh 1}%
\special{pa 3052 2363}%
\special{pa 2986 2343}%
\special{pa 3000 2363}%
\special{pa 2986 2382}%
\special{pa 3052 2363}%
\special{fp}%
%
\special{pn 8}%
\special{pa 2658 3150}%
\special{pa 1871 3150}%
\special{dt 0.045}%
\special{pa 1871 3150}%
\special{pa 1871 2756}%
\special{dt 0.045}%
%
\special{pn 8}%
\special{pa 3052 2363}%
\special{pa 3741 2363}%
\special{fp}%
%
\special{pn 8}%
\special{pa 3741 2363}%
\special{pa 3741 2560}%
\special{fp}%
\special{sh 1}%
\special{pa 3741 2560}%
\special{pa 3760 2494}%
\special{pa 3741 2507}%
\special{pa 3721 2494}%
\special{pa 3741 2560}%
\special{fp}%
%
\special{pn 8}%
\special{pa 3741 2560}%
\special{pa 3741 2756}%
\special{fp}%
%
\special{pn 8}%
\special{pa 3741 2756}%
\special{pa 3741 3150}%
\special{dt 0.045}%
%
\special{pn 8}%
\special{pa 3741 3150}%
\special{pa 2658 3150}%
\special{dt 0.045}%
\special{sh 1}%
\special{pa 2658 3150}%
\special{pa 2724 3170}%
\special{pa 2710 3150}%
\special{pa 2724 3130}%
\special{pa 2658 3150}%
\special{fp}%
%
\special{pn 8}%
\special{pa 689 2756}%
\special{pa 689 2363}%
\special{fp}%
\special{sh 1}%
\special{pa 689 2363}%
\special{pa 670 2429}%
\special{pa 689 2415}%
\special{pa 709 2429}%
\special{pa 689 2363}%
\special{fp}%
%
\special{pn 8}%
\special{pa 689 2363}%
\special{pa 689 2166}%
\special{fp}%
%
\special{pn 8}%
\special{pa 689 2166}%
\special{pa 2658 2166}%
\special{fp}%
\special{sh 1}%
\special{pa 2658 2166}%
\special{pa 2592 2146}%
\special{pa 2606 2166}%
\special{pa 2592 2186}%
\special{pa 2658 2166}%
\special{fp}%
%
\special{pn 8}%
\special{pa 2658 2166}%
\special{pa 3938 2166}%
\special{fp}%
%
\special{pn 8}%
\special{pa 3938 2166}%
\special{pa 3938 2560}%
\special{fp}%
\special{sh 1}%
\special{pa 3938 2560}%
\special{pa 3957 2494}%
\special{pa 3938 2507}%
\special{pa 3918 2494}%
\special{pa 3938 2560}%
\special{fp}%
%
\special{pn 8}%
\special{pa 3938 2560}%
\special{pa 3938 2756}%
\special{fp}%
%
\special{pn 8}%
\special{pa 3938 2756}%
\special{pa 3938 3347}%
\special{dt 0.045}%
%
\special{pn 8}%
\special{pa 3938 3347}%
\special{pa 2166 3347}%
\special{dt 0.045}%
\special{sh 1}%
\special{pa 2166 3347}%
\special{pa 2232 3367}%
\special{pa 2218 3347}%
\special{pa 2232 3327}%
\special{pa 2166 3347}%
\special{fp}%
%
\special{pn 8}%
\special{pa 2166 3347}%
\special{pa 689 3347}%
\special{dt 0.045}%
\special{pa 689 3347}%
\special{pa 689 2756}%
\special{dt 0.045}%
\put(49.7923,-170.7165){\makebox(0,0)[rb]{$B_1$}}%
\put(135.1506,-170.7165){\makebox(0,0)[rb]{$B_2$}}%
\end{picture}%

\comment{
\begin{center}
Sheet I\end{center}
\setlength{\unitlength}{0.65mm}
\begin{picture}(200,30)(0,25)
\put(0,40){\line(1,0){30}}
\put(60,40){\line(1,0){30}}
\put(130,40){\line(1,0){30}}
\put(10,43){$-\mathbf R_+$}
\put(12,35){$\mathbf R_+$}
\put(37,40){$
{\scriptstyle{\sqrt{-1}}}\cdot
\mathbf R_+$}
\put(70,35){$-\mathbf R_+$}
\put(72,43){$\mathbf R_+$}
\put(96,40){$
{\scriptstyle{-\sqrt{-1}}}\cdot
\mathbf R_+$}
\put(0,42){$p_1$}
\put(30,42){$p_2$}
\put(60,42){$p_3$}
\put(90,42){$p_4$}
\put(120,40){$\dots$}
\put(130,42){$p_{2g+1}$}
\put(160,42){$\infty$}
\end{picture}
\begin{center}
Sheet II\end{center}
\setlength{\unitlength}{0.65mm}
\begin{picture}(200,30)(0,25)
\put(0,40){\line(1,0){30}}
\put(60,40){\line(1,0){30}}
\put(130,40){\line(1,0){30}}
\put(12,43){$\mathbf R_+$}
\put(10,35){$-\mathbf R_+$}
\put(35,40){${\scriptstyle{-\sqrt{-1}}}\cdot\mathbf R_+$}
\put(72,35){$\mathbf R_+$}
\put(70,43){$-\mathbf R_+$}
\put(98,40){${\scriptstyle{\sqrt{-1}}}\cdot\mathbf R_+$}
\put(0,42){$p_1$}
\put(30,42){$p_2$}
\put(60,42){$p_3$}
\put(90,42){$p_4$}
\put(120,40){$\dots$}
\put(130,42){$p_{2g+1}$}
\put(160,42){$\infty$}
\end{picture}
\begin{center}
Cycles \end{center}
\setlength{\unitlength}{0.65mm}
\begin{picture}(200,30)(0,25)
\put(0,40){\line(1,0){30}}
\put(60,40){\line(1,0){30}}
\put(130,40){\line(1,0){30}}
\put(35,38){$A_1$}
\put(-5,45){\vector(1,0){40}}
\put(35,35){\vector(-1,0){40}}
\put(-5,45){\line(0,-1){10}}
\put(35,45){\line(0,-1){10}}
\put(95,38){$A_2$}
\put(55,45){\vector(1,0){40}}
\put(95,35){\vector(-1,0){40}}
\put(55,45){\line(0,-1){10}}
\put(95,45){\line(0,-1){10}}
\put(43,51){$B_1$}
\put(145,50){\vector(-1,0){130}}
\put(145,50){\line(0,-1){10}}
\put(15,50){\line(0,-1){10}}
\thicklines
\put(145,40){\line(0,-1){10}}
\put(15,40){\line(0,-1){10}}
\put(15,30){\vector(1,0){130}}
\thinlines
\put(110,43){$B_2$}
\put(142,47){\vector(-1,0){67}}
\put(142,47){\line(0,-1){7}}
\put(75,47){\line(0,-1){7}}
\thicklines
\put(142,40){\line(0,-1){7}}
\put(75,40){\line(0,-1){7}}
\put(75,33){\vector(1,0){67}}
\thinlines
\put(0,42){$p_1$}
\put(28,42){$p_2$}
\put(60,42){$p_3$}
\put(88,42){$p_4$}
\put(120,40){$\dots$}
\put(128,42){$p_{2g+1}$}
\put(160,42){$\infty$}
\end{picture}}
\caption{Symplectic basis}
\label{cycles}
\end{figure}
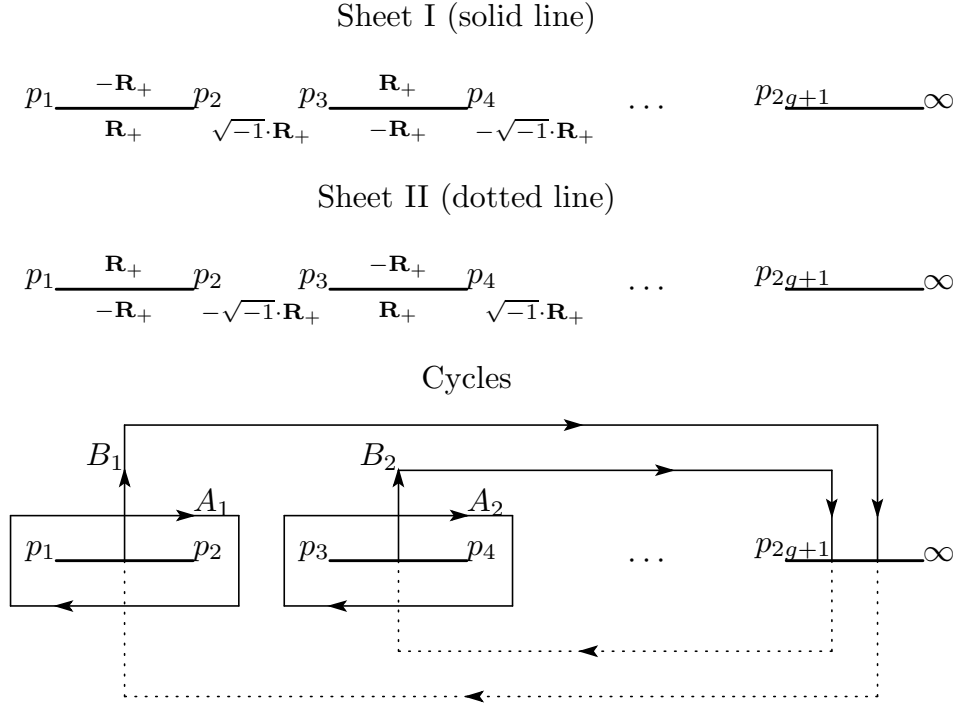
Note that the cycles satisfy 
$$
A_i\cdot A_j=
B_i\cdot B_j=0,\quad
A_i\cdot B_j=\delta_{ij}
$$ 
for $1\leq i,j \leq g$
under the intersection form.

We define holomorphic forms $\omega_j$ for $j=1, \dots,g$ as
$$
\omega_j=\frac{x^{j-1}dx}{y}.
$$
We define integrals $T_{i}^{(j)}$
by
$$
T_{i}^{(j)}=
\int_{p_i}^{p_{i+1}}\frac{x^{j-1}dx}
{\sqrt{\prod_{k=1}^i(x-p_k)\prod_{k=i+1}^{2g+1}(p_k-x)}}
$$
for $1\le i\le 2g$ and $1\le j\le g.$ 
Then the integrals $T_{i}^{(j)}$ are positive real numbers.
Using these integrals, we express the period integrals of $C$:
\begin{align*}
\int_{A_i}\omega_j=(-1)^i2T_{2i-1}^{(j)},\quad
\int_{B_i}\omega_j=2\sqrt{-1}(\sum_{k=i}^g(-1)^{k+1}T_{2k}^{(j)}).
\end{align*}
We set
\begin{equation}
\label{unnormalized period matirx}
A=(\int_{A_i}\omega_j )_{ij},\quad
B=
(\int_{B_i}\omega_j )_{ij}
\end{equation}
and consider the normalized period matrix $\tau$ by A-period:
\begin{equation}
\label{definition of normalized period matrix}
\tau=BA^{-1}.
\end{equation}
By Riemann's bilinear relations, $\det(A)$ is a non-zero real number 
and $\tau$ is a symmetric matrix whose imaginary part is positive definite.
Note also that $\tau$ is purely imaginary. 
\begin{remark}
Since the Vandermonde matrix
$\det(x_i^{j-1})_{1\leq i,j\leq g}$ is positive on
$p_{2i-1}\leq x_i\leq p_{2i}$, $(-1)^{g(g+1)/2}\det(A)$ is positive.
\end{remark}
For $I=(i_1, \dots, i_g)\in \mathbf F_2^g$, we define theta constants as
\begin{align*}
\theta_{I}(\tau)=
\sum_{n\in \mathbf{Z}^g}\exp (\pi \sqrt{-1}\cdot n \tau\; ^t n+ \pi 
\sqrt{-1}\cdot n\cdot ^t\!I).
\end{align*}

\begin{proposition}
\label{positivity of theta}
Let $M$ be a positive definite symmetric
$g\times g$ real matrix.
Then $\theta_I(\sqrt{-1}M)$
is positive for each $I\in \mathbf{F}_2^g$.
\end{proposition}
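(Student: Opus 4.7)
\bigskip

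\noindent\textbf{Proof proposal.}
First I would substitute $\tau=\sqrt{-1}M$ directly into the definition of $\theta_I$:
\begin{align*}
\theta_I(\sqrt{-1}M)
&=\sum_{n\in\mathbf{Z}^g}\exp\bigl(-\pi\,nM\,{}^tn\bigr)\exp\bigl(\pi\sqrt{-1}\,n\cdot {}^tI\bigr)\\
&=\sum_{n\in\mathbf{Z}^g}(-1)^{n\cdot {}^tI}\exp\bigl(-\pi\,nM\,{}^tn\bigr),
\end{align*}
using that $I\in\mathbf{F}_2^g$ has integer representatives so $n\cdot{}^tI\in\mathbf{Z}$. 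This already shows that $\theta_I(\sqrt{-1}M)$ is a real number, but its sign is not obvious because the terms of the sum alternate.

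The plan is therefore to rewrite this alternating real series as a sum of positive terms via Poisson summation. Consider the Schwartz function
$$
g(x)=\exp\bigl(\pi\sqrt{-1}\,x\cdot {}^tI\bigr)\exp\bigl(-\pi\,xM\,{}^tx\bigr),
\qquad x\in\mathbf{R}^g,
$$
whose restriction to $\mathbf{Z}^g$ gives exactly the series above. The standard Gaussian Fourier transform
$$
\int_{\mathbf{R}^g}\exp\bigl(-\pi\,xM\,{}^tx\bigr)\exp\bigl(-2\pi\sqrt{-1}\,x\cdot{}^t\xi\bigr)\,dx
=\frac{1}{\sqrt{\det M}}\exp\bigl(-\pi\,\xi M^{-1}\,{}^t\xi\bigr),
$$
combined with the translation rule (multiplication by $\exp(2\pi\sqrt{-1}\,x\cdot{}^t(I/2))$ in $x$-space yields a shift by $I/2$ in frequency space), gives
$$
\hat g(\xi)=\frac{1}{\sqrt{\det M}}\exp\bigl(-\pi\,(\xi-I/2)M^{-1}\,{}^t(\xi-I/2)\bigr).
$$
Applying Poisson summation $\sum_{n\in\mathbf{Z}^g}g(n)=\sum_{m\in\mathbf{Z}^g}\hat g(m)$ yields
$$
\theta_I(\sqrt{-1}M)=\frac{1}{\sqrt{\det M}}\sum_{m\in\mathbf{Z}^g}
\exp\bigl(-\pi\,(m-I/2)M^{-1}\,{}^t(m-I/2)\bigr).
$$

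Since $M$ is positive definite, so is $M^{-1}$, and $\det M>0$ so $\sqrt{\det M}$ is a positive real number. Every term on the right is strictly positive, hence the sum is strictly positive. This proves $\theta_I(\sqrt{-1}M)>0$.

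The one step requiring genuine care is the bookkeeping in the Poisson summation, specifically ensuring that the character $\exp(\pi\sqrt{-1}\,x\cdot {}^tI)$ translates to a shift by exactly $I/2$ (and not $I$ or $-I/2$) in Fourier space, and that the Gaussian normalization constant is the correct $(\det M)^{-1/2}$; the positivity of the output sum is then immediate.
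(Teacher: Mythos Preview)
Your proof is correct and follows essentially the same approach as the paper: both establish positivity by applying the theta inversion formula to express $\sqrt{\det M}\,\theta_I(\sqrt{-1}M)$ as a sum of positive Gaussian terms $\exp\bigl(-\pi(n+I/2)M^{-1}{}^t(n+I/2)\bigr)$. The only difference is that the paper cites the inversion formula from \cite{M1}, p.~195, whereas you derive it directly via Poisson summation.
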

\begin{proof}
By the inversion formula of the theta function in \cite{M1} p.195, we have
$$
\sqrt{\det(M)}\cdot\theta_I(\sqrt{-1}M)=
\sum_{n\in\mathbf Z^g}\exp\left(\sqrt{-1}\pi (n+\frac{I}{2})
(\sqrt{-1}M^{-1})\;^t(n+\frac{I}{2})\right),
$$
where $\sqrt{\det(M)}$ takes a positive value.
Since each term of the right hand side is positive, 
the left hand side is positive.
\end{proof}

We consider variable $u=(u_I)_{I\in \mathbf F_2^g}$ whose coordinates
are indexed by $\mathbf F_2^g$. The pair $(\theta_I(\tau))_{I}$
is denoted by $\theta(\tau)$.
For $I\in \mathbf F_2^g$ , we define quadratic polynomials 
$F_I(u)$
of $2^g$ variables $u=(u_I)_{I\in \mathbf F_2^{g}}$ by
$$
F_{I}(u)=\frac{1}{2^g}\sum_{P\in \mathbf F_2^g}u_{I+P}u_{P}.
$$
We remark that the coefficients of $2^{g}F_I(u)$ 
are in $\mathbf Z_{\geq 0}$.
By Theorem 2 in \cite{I} p.139, we have
$2\tau$-formulas of theta constants  
\begin{align}
\label{twice formular}
&\theta_{I}(2\tau)^2=
F_{I}(\theta(\tau))
\end{align}
for $I\in \mathbf{F}_2^g$.

Now prepare some combinatorial notations 
for the statement of Thomae's formula. 
For an index $I\in \mathbf F_2^g$,
we define a subset $S_I$ of $R=\{1, \dots,2g+1,\infty\}$ as follows.
Let $\eta_i$ 
be elements of $M(2,g,\mathbf F_2)$ defined as
\begin{align*}
\eta_{2i-1} & =
\left(
\begin{matrix}0&\cdots& 0&\overset{\text{$i$-th}}{1}
& 0 & \cdots & 0 \\
1&\cdots& 1 & 0& 0 & \cdots & 0 
\end{matrix}
\right), \\
\eta_{2i} & =
\left(
\begin{matrix}0&\cdots& 0&\overset{\text{$i$-th}}{1}
& 0 & \cdots & 0 \\
1&\cdots& 1 & 1& 0 & \cdots & 0 
\end{matrix}
\right),
\end{align*}
for $i=1, \dots, 2g+1$. 
Then a subset $T_I$ of $R-\{2g+1,\infty\}=\{1,2,\dots, 2g\}$
is characterized by the equality
$$
\left(
\begin{matrix}0 \\
I
\end{matrix}
\right)=\sum_{j\in T_I}\eta_j.
$$
We set 
$$
S_I=\left\{
\begin{matrix}
T_I &\textrm{if }\# T_I \textrm{ is even,}\\
T_I\cup \{2g+1\} &\textrm{if }\# T_I \textrm{ is odd. }
\end{matrix}
\right.
$$
Let $U$ be the set $\{1,3,5,\dots, 2g+1\}$ and 
$R_1\circ R_2$  be the symmetric difference
of sets $R_1$ and $R_2$.
\begin{proposition}[\cite{M2} p.120, \cite{F}]
Let $A$ be the period matrix of $C$ in 
(\ref{unnormalized period matirx}).
Then we have
\begin{align}
\label{Thomae formula}
&\frac{(2\pi )^{2g}\theta_{I}(\tau)^4}{\det (A)^2}=
\prod_{i<j,i,j\in S_I\circ U}
(p_j-p_i)
\prod_{i<j,i,j\notin S_I\circ U}
(p_j-p_i).
\end{align}
\end{proposition}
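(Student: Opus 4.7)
My plan is to deduce this from the classical Thomae formula as proved in \cite{M2} (and also \cite{F}), reducing the proposition to a careful bookkeeping of conventions for the symplectic basis of Figure \ref{cycles}.

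\textbf{Step 1: Identification of theta characteristics with subsets of the branch locus.} Write $R=\{1,\dots,2g+1,\infty\}$. The theta constant $\theta_I(\tau)$ is, in standard notation, $\theta\bigl[\begin{smallmatrix}0\\ I/2\end{smallmatrix}\bigr](0,\tau)$; because its top row vanishes, all these characteristics are even, and for generic $p_j$ they are non-singular. The Abel--Jacobi map sends each branch point $p_j$ to a $2$-torsion element of the Jacobian whose coordinates on our symplectic basis are precisely the columns recorded by the matrices $\eta_{2i-1},\eta_{2i}$. Under this correspondence every even non-singular characteristic is represented by an even-cardinality subset $S\subset R$, and the parity adjustment in the definition of $S_I$ (appending $\{2g+1\}$ when $|T_I|$ is odd) selects such a representative of the class $\binom{0}{I}$.

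\textbf{Step 2: Apply Mumford's form of Thomae.} For any even non-singular characteristic realised by a partition $R=V\sqcup V^c$ with $|V|=|V^c|=g+1$, the theorem of \cite{M2} p.\,120 states
$$
\theta[V](0,\tau)^4 \;=\; c(\tau)\cdot \prod_{\substack{i<j\\ i,j\in V}}(p_j-p_i)\prod_{\substack{i<j\\ i,j\in V^c}}(p_j-p_i),
$$
with a constant $c(\tau)$ depending only on the normalization of the period matrix, not on the partition. The symmetric difference with $U=\{1,3,\dots,2g+1\}$ shifts from the ``Riemann partition'' (which corresponds to the zero characteristic) to the one labelling $\binom{0}{I}$; this is why $S_I\circ U$ appears in the final formula.

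\textbf{Step 3: Fix the constant.} It remains to identify $c(\tau)=\det(A)^2/(2\pi)^{2g}$. I would verify this by differentiating the identity of Step 2 with respect to a branch point $p_k$: by Rauch's variational formula combined with the heat equation for $\theta$, the logarithmic derivative of the left-hand side involves $\omega_j(p_k)$ and entries of $A^{-1}$, while that of the right-hand side is a rational function of the $p_i-p_k$. Comparison gives a first-order equation for $c(\tau)\cdot\det(A)^{-2}$, and the $(2\pi)^{2g}$ is then pinned down by inspection of the asymptotic behaviour as two branch points collide and $C$ degenerates to a lower-genus curve, where both sides can be computed independently.

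\textbf{Main obstacle.} The substantive point is Step 1: checking that the combinatorial rules (the columns of $\eta_{2i-1},\eta_{2i}$, the parity correction defining $S_I$, and the twist by $U$) implement the characteristic-partition bijection of \cite{M2} for the specific basis of Figure \ref{cycles}, in particular with respect to the treatment of the branch point $\infty$. Once that identification is in place, the proposition is just \cite{M2} together with a one-point normalization.
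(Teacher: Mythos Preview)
The paper does not prove this proposition at all: it is quoted directly from \cite{M2} and \cite{F}, with a single sentence afterward noting that the sign is fixed by the fact that $\theta_I(\tau)$ is real (so $\theta_I(\tau)^4\ge 0$, while the right-hand side is manifestly positive for $p_1<\cdots<p_{2g+1}$).

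Your Steps 1--2 are exactly the bookkeeping needed to match the paper's conventions to Mumford's, and your identification of this as the ``main obstacle'' is apt. However, your Step 3 is superfluous. Mumford's Thomae formula on p.\,120 already carries the explicit constant $\det(A)^{-2}(2\pi)^{2g}$ (up to the $\pm$ sign), so there is nothing to pin down via Rauch's formula or a degeneration argument. What remains is only the sign, and for that the paper's one-line observation (reality of $\theta_I(\tau)$ for purely imaginary $\tau$, established in Proposition \ref{positivity of theta}) is both simpler and sharper than the asymptotic comparison you propose. In short: your outline is not wrong, but it re-derives a constant that is already in the cited reference, and it replaces a one-line sign check by a variational argument.
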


\noindent
Here we used the fact that $\theta_I(\tau)$ is a real number
to determine the sign of Thomae's formula in \cite{M2}.

\section{Statement and proof of the main theorem}

\begin{definition}
[AGM sequences]
\label{def of recursive definition}

\par\noindent
\begin{enumerate}
\item
For an element $u=(u_I)_I\in \mathbf R_+^{2^g}$, we define
the termwise root $\sqrt{u}$ of $u$ by $(\sqrt{u_{I}})_I$.
\item
Let  $a=(a_I)_I$ be an element in $(\mathbf R_+)^{2^g}$.
We define $a_k=(a_{k,I})_I$ inductively by the relation
\begin{align*}
a_{0,I}=a_I,\quad a_{k+1,I}=
F_{I}(\sqrt{a_k}).
\end{align*}
\end{enumerate}
\end{definition}
A proof of the following proposition will be left to readers.
\begin{proposition-definition}[Generalized arithmetic-geometric mean]

For an element $a=(a_I)_I$ in $(\mathbf R_+)^{2^g}$,
the limits $\displaystyle\lim_{k\to \infty}a_{k,I}$ exist 
and are independent of
indexes $I$. This common limit is called the 
generalized arithmetic-geometric mean of $a=(a_I)_I$ 
and denoted by $\mu_g(a)$.
\end{proposition-definition}
\begin{problem}
Is it possible to express the generalized arithmetic-geometric mean 
$\mu_g(a)$ of $a=(a_I)_I\in (\mathbf R_+)^{2^g}$ in terms of period 
integrals of a family of varieties parametrized by $a$ ?
\end{problem}
\begin{theorem}
\label{main theorem}
Let $p_1<\cdots <p_{2g+1}$ be real numbers.
We define $a_{I}$ by
\begin{equation}
\label{initial}
a_{I}=\sqrt{
\prod_{i<j,i,j\in S_I\circ U}
(p_j-p_i)
\prod_{i<j,i,j\notin S_I\circ U}
(p_j-p_i)}.
\end{equation}
Then we have
$$
\mu_g(a)=\frac{(2\pi)^g}{\mid\det(A)\mid},
$$
where $A$ is the period matrix of $C$ in 
(\ref{unnormalized period matirx}).
\end{theorem}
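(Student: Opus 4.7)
The proof should be a direct, higher-genus generalization of the classical argument sketched in the introduction, and all the ingredients have essentially been assembled in Section 2. Here is the plan.

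\medskip

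\noindent\textbf{Step 1: Identify the initial data as $\theta$-values.} Let $\tau = BA^{-1}$ be the normalized period matrix of the hyperelliptic curve $C$ defined by $p_1<\cdots<p_{2g+1}$. By Thomae's formula (\ref{Thomae formula}) and the definition (\ref{initial}) of $a_I$, I can write
\begin{equation*}
a_I \;=\; \sqrt{\,\frac{(2\pi)^{2g}\,\theta_I(\tau)^4}{\det(A)^2}\,} \;=\; c\cdot \theta_I(\tau)^2,
\qquad c := \frac{(2\pi)^g}{|\det(A)|},
\end{equation*}
where the sign on the right is fixed because $\tau$ is purely imaginary with positive definite imaginary part, so by Proposition \ref{positivity of theta} each $\theta_I(\tau)$ is positive and $\theta_I(\tau)^2>0$.

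\medskip

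\noindent\textbf{Step 2: Iterate using the $2\tau$-formula.} I will show by induction on $k$ that
\begin{equation*}
a_{k,I} \;=\; c\cdot \theta_I(2^k\tau)^2.
\end{equation*}
The base case $k=0$ is Step 1. For the induction step, since $2^k\tau$ is again purely imaginary with positive definite imaginary part, Proposition \ref{positivity of theta} gives $\theta_I(2^k\tau)>0$, so the termwise positive square root is $\sqrt{a_{k,I}}=\sqrt{c}\,\theta_I(2^k\tau)$. Using the quadratic homogeneity $F_I(\lambda u)=\lambda^2 F_I(u)$ together with the $2\tau$-formula (\ref{twice formular}),
\begin{equation*}
a_{k+1,I} \;=\; F_I\!\bigl(\sqrt{a_k}\bigr) \;=\; c\cdot F_I\!\bigl(\theta(2^k\tau)\bigr) \;=\; c\cdot \theta_I(2^{k+1}\tau)^2.
\end{equation*}

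\medskip

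\noindent\textbf{Step 3: Pass to the limit.} Since $\tau$ is purely imaginary and $\operatorname{Im}(\tau)$ is positive definite, $\operatorname{Im}(2^k\tau)=2^k\operatorname{Im}(\tau)\to\infty$ in the positive cone; hence in the series defining $\theta_I(2^k\tau)$ the $n=0$ term equals $1$ and all other terms tend to $0$, giving $\theta_I(2^k\tau)\to 1$. Therefore
\begin{equation*}
\mu_g(a) \;=\; \lim_{k\to\infty} a_{k,I} \;=\; c \;=\; \frac{(2\pi)^g}{|\det(A)|},
\end{equation*}
as desired.

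\medskip

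\noindent\textbf{Main obstacle.} There is no deep obstacle, as the Thomae formula and the $2\tau$-formula do all the heavy lifting. The only point that requires care is the sign choice in the termwise square root at each iteration: $a_{k,I}$ equals $c\,\theta_I(2^k\tau)^2$, but a priori $\sqrt{a_{k,I}}$ could be $\pm\sqrt{c}\,\theta_I(2^k\tau)$, and the quadratic identity $\theta_I(2\tau)^2=F_I(\theta(\tau))$ is not sign-invariant in its arguments. This is precisely what Proposition \ref{positivity of theta} is for: it guarantees that under the purely imaginary, positive definite assumption on $\tau$ (which is preserved under $\tau\mapsto 2\tau$), the positive square root of $a_{k,I}$ matches $\sqrt{c}\,\theta_I(2^k\tau)$ on the nose.
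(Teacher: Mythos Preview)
Your proposal is correct and follows essentially the same approach as the paper's own proof: identify the initial data with $c\,\theta_I(\tau)^2$ via Thomae's formula, propagate this by induction using Proposition~\ref{positivity of theta} to fix the square-root sign together with the $2\tau$-formula~(\ref{twice formular}), and then let $k\to\infty$. If anything, you are more explicit than the paper about the limit $\theta_I(2^k\tau)\to 1$ and about why the sign issue is the only subtlety.
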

\begin{proof}
By the initial condition, we have
$$
a_{0,I}= \frac{(2\pi)^g \theta_{I}(\tau)^2}{|\det(A)|}.
$$
We show that 
$$
a_{n,I}=
\frac{(2\pi)^g\theta_{I}(2^n\tau)^2}{|\det(A)|},
$$
by induction on $n$.
Since $\theta_I(2^n\tau)$ is a positive real number 
by Proposition \ref{positivity of theta}
for each $I$, 
we have
\begin{align*}
a_{n+1,I}=&
F(\sqrt{a_{n}}) \\
=&\frac{(2\pi)^g\cdot
F(\theta(2^n\tau)) }{|\det(A)|}
\qquad\text{(by the induction hypothesis)}\\
=&\frac{(2\pi)^g\cdot
\theta_I(2^{n+1}\tau)^2}{|\det(A)|} 
\qquad\text{(by the formula (\ref{twice formular}))}
\end{align*}
Therefore we have
$$
\lim_{n\to \infty}a_{n,I}=\frac{(2\pi)^g}{|\det(A)|}.
$$
\end{proof}
\section{Period of Calabi-Yau variety of certain type}
\label{Calabi-Yau period}
We study a relation between the generalized arithmetic-geometric mean of
the last section and a period of a Gorenstein Calabi-Yau 
variety of a certain type.

\begin{definition}[Calabi-Yau varieties]
A variety $X$ only with Gorenstein singularities is called 
a Calabi-Yau variety if the dualizing sheaf of $X$ is trivial and
$X$ has a global crepant resolution.
\end{definition}

Let $\mathbf P=\mathbf P^{g}$ be the $g$ dimensional projective space 
and $H_1\cdots H_{2g+2}$
be hyperplanes of $\mathbf P$. There is a unique line bundle $\Cal L$
on $\mathbf P$ and a unique isomorphism
$\varphi:\Cal L^{\otimes 2}\simeq O_X(-\sum_{i=1}^{2g+2}H_i)$ up to 
a non-zero constant. Using the isomorphism $\varphi$, we define a 
double covering $X={\Cal Spec}(\Cal O_X\oplus \Cal L)$, where the
multiplication on $\Cal L\otimes \Cal L\to \Cal O_X$ is given by the
isomorphism $\varphi$.

By the following Proposition 
\ref{existence of global crepant resolution}, 
$X$ becomes a Calabi-Yau variety,
since it admits a global crepant resolution.
\begin{proposition}
\label{existence of global crepant resolution} $ $
\begin{enumerate}
\item
If $\cup_{i=1}^{2g+2}H_i$ is normal crossing, then the variety $X$
has only Gorenstein singularities. Also it admits a global
crepant resolution.
\item
Under the above hypotheses, the dualizing sheaf is isomorphic
to the structure sheaf.
\end{enumerate}
\end{proposition}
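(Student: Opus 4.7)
The plan is to reduce both assertions to a local analysis at each point $p\in\mathbf{P}^g$ and invoke standard facts about cyclic covers. Since $\bigcup H_i$ is normal crossing, I would first choose \'etale (or analytic) coordinates $(x_1,\dots,x_g)$ near $p$ so that the $k$ hyperplanes through $p$ become $\{x_1=0\},\dots,\{x_k=0\}$. A local trivialization of $\mathcal{L}$, combined with the identification $\mathcal{L}^{\otimes 2}\simeq\mathcal{O}_{\mathbf{P}^g}(-\sum H_i)$, presents $X$ locally (after absorbing a unit into the trivialization of $\mathcal{L}$) as the hypersurface $\{z^2=x_1\cdots x_k\}$ in a smooth ambient scheme. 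Being such a hypersurface, it is Gorenstein, which proves the first half of (1). For part (2) I use the cyclic-cover adjunction formula $\omega_X\simeq\pi^*(\omega_{\mathbf{P}^g}\otimes\mathcal{L}^{-1})$: since $\mathrm{Pic}(\mathbf{P}^g)$ is torsion free, $\mathcal{L}^{\otimes 2}\simeq\mathcal{O}(-2g-2)$ forces $\mathcal{L}\simeq\mathcal{O}(-g-1)$, whence $\omega_{\mathbf{P}^g}\otimes\mathcal{L}^{-1}\simeq\mathcal{O}(-g-1)\otimes\mathcal{O}(g+1)\simeq\mathcal{O}$ and therefore $\omega_X\simeq\mathcal{O}_X$.

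For the crepant resolution the local model $X_k=\{z^2=x_1\cdots x_k\}$ is affine toric: $X_k\cong\mathbf{A}^k/G_k$, where $G_k\subset SL_k$ is the kernel of the product map $(\mathbf{Z}/2)^k\to\mathbf{Z}/2$ acting on $\mathbf{A}^k$ by sign changes, with quotient generators $x_i=t_i^2$ and $z=t_1\cdots t_k$. Equivalently, $X_k$ corresponds to the cone $\mathbf{R}_{\geq 0}^k$ in the lattice $N=\{n\in\mathbf{Z}^k:\sum n_i\in 2\mathbf{Z}\}$. This cone is Gorenstein: its primitive ray generators $2e_1,\dots,2e_k$ all lie on the hyperplane $\sum n_i=2$, whose intersection with the cone is a $(k-1)$-simplex $P_k$ that contains the further lattice points $e_i+e_j$. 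My plan is to produce a unimodular lattice triangulation of $P_k$ using all of $2e_i$ and $e_i+e_j$ as vertices; any such triangulation refines the cone into unimodular subcones and therefore yields a crepant toric resolution of $X_k$. Choosing these triangulations compatibly across strata of the arrangement---and exploiting that each face inclusion $P_{k-1}\hookrightarrow P_k$ corresponds to restricting to a boundary component---the local resolutions glue to a global crepant resolution $\tilde{X}\to X$.

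The local normal form, the Gorenstein deduction, and the adjunction computation are routine. The main obstacle is the combinatorial step in the second paragraph: producing a uniformly compatible family of unimodular triangulations of the polytopes $P_k$, $2\leq k\leq g$, that respects the face structure. For small $k$ this is explicit (for $k=3$ one takes the midpoint subdivision into four unit triangles; for $k=4$ one uses the four corner tetrahedra plus a diagonal of the central octahedron), but a uniform inductive construction---or an appeal to a general existence theorem for unimodular triangulations of dilated simplices---is what is needed to conclude in arbitrary dimension, and this is where I expect the real work to lie.
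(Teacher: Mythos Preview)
Your outline matches the paper's approach almost exactly: the same local normal form $\eta^2=\xi_1\cdots\xi_h$, the same toric description via the index-$2$ sublattice $N=\{n\in\mathbf Z^h:\sum n_i\equiv 0\pmod 2\}$, and the same reduction of ``crepant resolution'' to ``unimodular lattice triangulation of the height-$2$ simplex $P_h$''. Your argument for part~(2) via the cyclic-cover adjunction $\omega_X\simeq\pi^*(\omega_{\mathbf P^g}\otimes\mathcal L^{-1})$ is cleaner than the paper's, which instead writes down the explicit generator $\omega_X=\eta^{-1}\,d\xi_1\wedge\cdots\wedge d\xi_g$ and checks it generates the dualizing sheaf.

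The gap you yourself flag is precisely what the paper supplies. Rather than invoking a general existence theorem, the paper gives an explicit unimodular fan refinement indexed by ``unfair tournaments'': sequences $\mathbf w=(w_1,\dots,w_{h-1})$ with $w_1\in\{1,2\}$ and $w_i\in\{w_{i-1},i+1\}$, where $\sigma_{\mathbf w}$ is the cone on
\[
e_1+e_2,\ e_{w_1}+e_3,\ e_{w_2}+e_4,\ \dots,\ e_{w_{h-2}}+e_h,\ 2e_{w_{h-1}}.
\]
One checks directly that each such set is a $\mathbf Z$-basis of $N$, so every cone is unimodular, and that $\omega_X$ pulls back to a unit times $dz_1\wedge\cdots\wedge dz_g$, so the resolution is crepant. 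This construction is \emph{not} $S_h$-symmetric (your $k=4$ suggestion of ``corner tetrahedra plus a diagonal of the octahedron'' already shows symmetry must break); instead it depends on the chosen ordering $1,2,\dots,h$ of the local branches. The gluing step you worry about is then handled by fixing once and for all a global ordering of $H_1,\dots,H_{2g+2}$: at every point the local tournament triangulation is determined by the induced ordering of the hyperplanes through that point, and one verifies that restricting the $h$-dimensional tournament fan to the facet opposite $2e_j$ reproduces the $(h-1)$-dimensional tournament fan for the remaining indices in their inherited order. That is the compatibility statement that makes the local resolutions patch.
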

\begin{proof}
(1) 
We choose a local coordinate $\xi_1, \dots, \xi_g$ of $\bold P$
such that the 
divisor $\cup_{i=1}^{2g+2}H_i$ is locally defined by
$\xi_1\cdots \xi_h=0$ ($h\leq g$).
Then the variety $X$
is locally defined by the equation $\eta^2=\xi_1\cdots \xi_h$.
Therefore X is locally isomorphic to
$Spec(\sigma\check{}\cap M^*)\times \bold A^{g-h}$
, where
$$
M^*=\mathbf Z^h+(\frac{1}{2}, \dots ,
\frac{1}{2})\mathbf Z\subset \mathbf Q^h,
\quad \sigma\check{}=(\mathbf R_+)^h.
$$
Let $\sigma$ be
the dual simplex of $\sigma\check{}$ and $M$ be the dual lattice of $M^*$.
Since $\sigma$ is generated by elements contained primitive
hyperplanes, $X$ is Gorenstein.
We can construct a global crepant resolution as follows.
We make a refinement of the simplex $\sigma$ into a regular 
fan $\cup_{\mathbf w\in \rho_h}\sigma_{\mathbf w}$
indexed by the set $\rho_h$ of ``unfair tournament'' of 
$\{1,\dots, h\}$.  
A sequence $\mathbf w=(w_1,\dots, w_{h-1})$
is an element of the set $\rho_h$ 
if it satisfies the following properties:
\begin{enumerate}
\item[(i)]
$w_1$ is equal to $1$ or $2$ and 
\item[(ii)]
$w_{i}$ is equal to $w_{i-1}$ or $i+1$ for $2\leq i\leq h-1$.
\end{enumerate}
For an element $\mathbf w$ of $\rho_h$, 
we define $\sigma_{\mathbf w}$
as a cone generated by 
\begin{align*}
B_{\mathbf w}=\{&u_1=e_1+e_2,u_2=e_{w_1}+e_{3},
u_3=e_{w_2}+e_4 \dots, 
u_{h-1}=e_{w_{h-2}}+e_{h},\\
&u_h=2e_{w_{h-1}}\},
\end{align*}
where $e_i$ is the standard basis of
$\mathbf Z^h\supset M$. 
Since the set $B_{\mathbf w}$ is a free base of $M$, 
the fan $\cup_{\mathbf w\in \rho_h}\sigma_{\mathbf w}$ 
is regular and 
it defines a smooth 
toric variety $\tilde X$.
The coordinates associated to $\mathbf Z^h\subset M^*$ 
are written as
$\xi_1,\dots,\xi_h$. 
($\eta$ corresponds to $\frac{1}{2}(1, \dots, 1)$.)
Let $z_1, \dots ,z_h$ be the coordinates associated to
the dual base $B_{\mathbf w}$ of $M$. Then we have
$$
z_1^{u_1}\cdots z_h^{u_h}=\xi_1^{e_1}\cdots \xi_h^{e_h}.
$$
Thus $\xi_1^{\frac{1}{2}}\cdots \xi_h^{\frac{1}{2}}=
z_1\cdots z_h$. Let $\omega_X$ be the rational differential form
on $X$ defined by
$$
\omega_X=\xi_1^{-\frac{1}{2}}\cdots \xi_h^{-\frac{1}{2}}
d\xi_1\wedge \cdots d\xi_h\wedge d\xi_{h+1}\wedge\cdots \wedge
d\xi_g.
$$ 
It is a generator 
of the dualizing sheaf of $X$.
The pull back of $\omega_X$
to the affine toric variety associated to
$\sigma_{\mathbf w}$ is a non-zero constant multiple of
$$
dz_1\wedge\cdots \wedge dz_h\wedge d\xi_{h+1}\wedge\cdots \wedge
d\xi_g.$$
Therefore the 
map $\tilde X\times \bold A^{g-h}\to X$ is a crepant resolution.
Since the local crepant resolutions depend only on the choice of
order of the components of the branching divisor, they are patched
together into a global crepant resolution. 

\medskip 
\noindent
(2) 
In this proof, we use symbols $\xi_1,\dots, \xi_g$ as the global
inhomogeneous coordinates of $\mathbf P$ for the infinite hyperplane
$H_{2g+2}$. Let
$l_i=l_i(\xi)$ 
be an inhomogeneous linear form defining the hyperplane $H_i$ 
for $i=1, \dots, 2g+1$.
Then a defining equation of the double covering $X$ 
can be written as
$$
\eta^2=\prod_{i=1}^{2g+1}l_i(\xi).
$$
As is shown in the proof of (1),
\begin{equation}
\label{global holomorphic}
\omega_X= 
\frac{1}{\eta}d\xi_1\wedge \cdots \wedge d\xi_g
\end{equation}
is a global generator of the dualizing sheaf of $X$.
\end{proof}

For real numbers $p_1<\dots< p_{2g+1}$, we define linear forms $l_i$ by
$$
l_i=\xi_1-p_i\xi_2+p_i^2\xi_3+\cdots +(-1)^{g-1}p_i^{g-1}\xi_g+(-1)^gp_i^g
$$
and set $H_i=\{l_i=0\}$.
By using the Vandermonde matrix, 
we see that $\cup _{i=1}^{2g+2}H_i$ is a normal
crossing divisor. 

We define a subset $\Delta$ of $\mathbf R^g$ as
\begin{align*}
\Delta=\{(x_1, \dots, x_g)\mid &
(-1)^{i-1}l_{2i-1}(x_1, \dots, x_g) \geq 0 \text{ for }i=1, \dots, g+1, 
\text{ and }\\
&(-1)^{i}l_{2i}(x_1, \dots, x_g) \geq 0 \text{ for }i=1, \dots, g \}.
\end{align*}
We set
$$
\omega_X=\frac{1}{\eta}d \xi_1\wedge\cdots \wedge d \xi_g,\text{ and }
\gamma_\pm=\{(\xi,\eta)\in X \mid \xi\in \Delta, \pm \eta\geq 0 \}.
$$
Then $\gamma=\gamma_+-\gamma_-$ defines a $g$-chain in $X$.
We have the following relation between the generalized
arithmetic-geometric mean and a period of the Calabi-Yau variety $X$.
The following theorem is obtained by  Theorem 2 in \cite{T}. 
\begin{theorem} 
Let $a=(a_{I})_I$ be an element of $\mathbf R_+^g$
defined in (\ref{initial}). 
Under the above notation, we have
$$
\mu_g(a)=
\frac{2\pi^g}
{\int_{\gamma}\omega_X}.
$$
\end{theorem}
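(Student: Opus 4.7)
The plan is to reduce the statement, via Theorem~\ref{main theorem}, to the single identity
$$\int_\gamma \omega_X = \frac{|\det(A)|}{2^{g-1}},$$
which would combine with $\mu_g(a)=(2\pi)^g/|\det(A)|$ to give $\mu_g(a)=2\pi^g/\int_\gamma\omega_X$. To prove this identity I would pull $\omega_X$ back through the symmetric product of the hyperelliptic curve $C$, turning the $g$-dimensional Calabi-Yau period into a Vandermonde determinant of one-variable hyperelliptic periods.

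The key change of coordinates is $\xi_k=e_{g-k+1}(x_1,\dots,x_g)$, where $e_m$ denotes the $m$-th elementary symmetric polynomial in new variables $x_1,\dots,x_g$. Using the identity $\prod_{j=1}^g(p-x_j)=\sum_{m=0}^g(-1)^m e_m p^{g-m}$, one checks that $l_i(\xi)=\prod_{j=1}^g(x_j-p_i)$, and the Jacobian of the substitution equals the Vandermonde $\det(x_j^{i-1})$. Consequently, on $\gamma_+$,
$$\omega_X=\pm\det\bigl(x_j^{i-1}/y_j\bigr)_{1\le i,j\le g}\,dx_1\wedge\cdots\wedge dx_g,\qquad y_j=\sqrt{\prod_{i=1}^{2g+1}(x_j-p_i)}.$$
A sign count (there are $2(g-j+1)$ negative factors in $\prod_i(x_j-p_i)$ on $[p_{2j-1},p_{2j}]$) shows both that each $y_j$ is real and that the substitution restricts to a bijection $\prod_{j=1}^g[p_{2j-1},p_{2j}]\to\Delta$. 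Expanding the determinant by the Leibniz rule and identifying the resulting one-variable integrals with the $T_{2j-1}^{(i)}$ of Section~\ref{comparison to theta} then gives
$$\int_{\gamma_+}\omega_X=\pm\det\bigl(T_{2j-1}^{(i)}\bigr)_{1\le i,j\le g}.$$
Since $\omega_X$ changes sign with $\eta$, one has $\int_\gamma\omega_X=2\int_{\gamma_+}\omega_X$; combined with the formula $\int_{A_j}\omega_i=(-1)^j\cdot 2T_{2j-1}^{(i)}$, which yields $|\det(A)|=2^g|\det(T_{2j-1}^{(i)})|$, this closes the chain.

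The main obstacle will be sign bookkeeping: the Vandermonde orientation, the reversal $\xi_k\leftrightarrow e_{g-k+1}$ in the change of coordinates, the alternating $(-1)^j$ in the $A$-period formula, and the convention $\gamma=\gamma_+-\gamma_-$ must all conspire so that the manifestly positive period $\int_\gamma\omega_X$ matches $|\det(A)|/2^{g-1}$ with the correct sign. Modulo this accounting, the proof is a direct reduction of the Calabi-Yau period to a determinant of hyperelliptic periods via the symmetric-product map.
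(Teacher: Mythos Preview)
Your approach is correct and is essentially the same as the paper's: the paper defines the map $\pi:C^g\to X$ via elementary symmetric functions, computes $\pi^*\omega_X=\sum_{\sigma}\sgn(\sigma)\boxtimes_i\omega_{\sigma(i)}$, and uses the cycle identity $\pi_*(A_1\times\cdots\times A_g)=(-1)^{g(g+1)/2}2^{g-1}\gamma$ to obtain $2^{g-1}\int_\gamma\omega_X=|\det(A)|$ before invoking Theorem~\ref{main theorem}. Your change-of-variables computation is exactly this argument unpacked in coordinates, with the pushforward replaced by the equivalent observation that $\prod_j[p_{2j-1},p_{2j}]$ maps bijectively onto $\Delta$.
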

\begin{proof}
Let $C_j$ be a copy of the curve $C$ given by 
$y_j^2=\prod_{i=1}^{2g-1}(x_j-p_i)$.
We define a map $\pi:C_1\times \cdots \times C_g\to X$ by
sending $((x_1,y_1), \dots, (x_g,y_g))$ to the point
whose $\xi_k$-coordinate and $\eta$-coordinate are 
the $(g+1-k)$-th elementary symmetric
function of $x_1, \dots, x_g$ and $\prod_{j=1}^gy_j$, respectively.
Then we have 
$$
\pi^*\omega_X=\sum_{\sigma\in \Cal S_g}
\sgn (\sigma)\boxtimes_{i=1}^{g}\omega_{\sigma(i)}.
$$
Since
$
\pi_*(A_1\times \dots\times A_g)=(-1)^{g(g+1)/2}2^{g-1}\gamma,
$
we have
$$
2^{g-1}\int_{\gamma}\omega_X=\mid\det(A)\mid.
$$
By Theorem \ref{main theorem}, we have the theorem.
\end{proof}
\section{Genus two case}

In this section, we will give a detailed study for the case of $g=2$. 
Refer to \cite{B} and \cite{Me} for the original results by Borchardt
and recent related works by Mestre, respectively.
We begin with $(a_{00},a_{01},a_{10},a_{11})$ as initial data for 
AGM sequences.
The recursive relations for $a_{k,I}$ $(I\in \mathbf F_2^2, k=0,1,\cdots)$
are given as 
$a_{0,I}=a_I$ and
$a_{k+1,I}=F_I(\sqrt{a_{k,00}},\cdots,\sqrt{a_{k,11}})$, where
\begin{align*}
F_{00}(u_{00},u_{01},u_{10},u_{11})
&=\frac{1}{4}(u_{00}^2+u_{01}^2+u_{10}^2+u_{11}^2), \\
F_{01}(u_{00},u_{01},u_{10},u_{11})
&=\frac{1}{2}(u_{00}u_{01}+u_{11}u_{10}), \\
F_{10}(u_{00},u_{01},u_{10},u_{11})
&=\frac{1}{2}(u_{00}u_{10}+u_{11}u_{01}), \\
F_{11}(u_{00},u_{01},u_{10},u_{11})
&=\frac{1}{2}(u_{00}u_{11}+u_{10}u_{01}). \\
\end{align*}

In the following,
we assume that $a_{00}>a_{10}>a_{11}>a_{01}$ and
$a_{00}a_{01}>a_{10}a_{11}$. 
First we define positive real numbers $k_1> k_2$ and 
$0<l_2<l_1<1$ such that
$$
(a_{00}+a_{01})^2-(a_{10}+a_{11})^2=k_1^2,\quad
(a_{00}-a_{01})^2-(a_{10}-a_{11})^2=k_2^2,
$$
\begin{align*}
a_{00}+a_{01}=\frac{1+l_1^2}{1-l_1^2}k_1,\quad
a_{10}+a_{11}=\frac{2l_1}{1-l_1^2}k_1, \\
a_{00}-a_{01}=\frac{1+l_2^2}{1-l_2^2}k_2,\quad
a_{10}-a_{11}=\frac{2l_2}{1-l_2^2}k_2,
\end{align*}
We set
\begin{align*}
&p_1=0,\quad
p_2=\displaystyle \frac {1}{(1 - {l_2}^{2})\,(1
 - {l_1}^{2})},
\\
&
{p_3} = {\displaystyle \frac {2({l_1}\,{l_2} + 
1)\, a_{00}
}{(1 - {l_1}^{2})\,
(1 - {l_2}^{2})\,({k_1} + {k_2})\,(1 - {
l_1}\,{l_2})}},
\\
&
{p_4} = {\displaystyle \frac {2({l_1}\,{l_2} + 
1)\,a_{01}
}{(1 - {l_1}^{2})\,
(1 - {l_2}^{2})\,({k_1} - {k_2})\,(1 - {
l_1}\,{l_2})}},
\\
&
{p_5} = {\displaystyle \frac {
4a_{00}a_{01}
}{(
{k_1} - {k_2})\,({k_1} + {k_2})\,(1 - 
{l_2}^{2})\,(1 - {l_1}^{2})}}. 
\end{align*}
Then we have
\begin{align}
\label{ratio relation}
(a_{00}^2:a_{01}^2:a_{10}^2:a_{11}^2)=
(&
(p_3-p_1)(p_5-p_1)(p_5-p_3)(p_4-p_2): \\
\nonumber &(p_4-p_1)(p_5-p_1)(p_5-p_4)(p_3-p_2): \\
\nonumber &(p_3-p_2)(p_5-p_2)(p_5-p_3)(p_4-p_1): \\
\nonumber &(p_4-p_2)(p_5-p_2)(p_5-p_4)(p_3-p_1)
).
\end{align}
Therefore by Theorem \ref{main theorem}, we have
\begin{align*}
&\lim_{n\to \infty} a_{n,00}=
\frac{4\pi ^{2}a_{00}}
{\mid \det(A)\mid \sqrt{
(p_3-p_1)(p_5-p_1)(p_5-p_3)(p_4-p_2)}}\\
&=\frac{8\pi ^{2}}{\mid \det(A)\mid }\cdot
(1-l_1^2)^2(1-l_2^2)^2
\sqrt{\frac{(a_{00}a_{01}-a_{10}a_{11})^3
(1-l_1l_2)^3}
{a_{00}a_{01}a_{10}a_{11}(l_1^2-l_2^2)(1+l_1l_2)}}.
\end{align*}
where $A$ is the period matrix of 
$C$ in (\ref{unnormalized period matirx}).

Using the result of \S \ref{Calabi-Yau period}, we have
$$
\mid \det (A)\mid =
4\cdot \int_{\Delta}\frac{d\xi_1\wedge d\xi_2}
{\sqrt{\prod_{i=1}^5(\xi_1-p_i\xi_2+p_i^2)}},
$$
where $\Delta$ is a domain in $\mathbf R^2$ defined by
$l_1 \geq 0, -l_2\geq 0,-l_3\geq 0,
l_4\geq 0$ and $l_5\geq 0$. This is a period integral of the
covering $X$ of $\mathbf P^2$ defined by
$$
\eta^2=\prod_{i=1}^5(\xi_1-p_i\xi_2+p_i^2).
$$
We notice that the variety $X$ is the (nodal) Kummer 
surface of the Jacobian of $C$.
\begin{remark}
When $$a_{00}>a_{01}=a_{10}>a_{11},\quad a_{00}a_{11}>a_{01}a_{10},$$
$\mu_2(a)$ can be expressed 
in terms of the arithmetic-geometric mean $\mu_1$ and expressions 
$p_2,\dots,p_5$ by $a$ (see \cite{Me}).
\end{remark}

\bigskip
\begin{flushleft}
\begin{minipage}{7.0cm}
Keiji \textsc{Matsumoto}\\
Department of Mathematics\\
Hokkaido University\\
Sapporo, 060-0810, Japan\\
e-mail: matsu@math.sci.hokudai.ac.jp\\
\end{minipage}
\end{flushleft}

\begin{flushleft}
\begin{minipage}{7.0cm}
Tomohide \textsc{Terasoma}\\
Graduate School of Mathematical Sciences\\
The University of Tokyo \\
Komaba, Meguro, Tokyo, 153-8914, Japan\\
e-mail: terasoma@ms.u-tokyo.ac.jp\\
\end{minipage}
\end{flushleft}

\end{document}